\theoremstyle{plain}% default
\newtheorem{thm}{Theorem}[section]
\newtheorem{lem}[thm]{Lemma}
\newtheorem{prop}[thm]{Proposition}
\newtheorem{cor}[thm]{Corollary}
\theoremstyle{definition}
\newtheorem{defn}[thm]{Definition}
\newtheorem*{conj}{Conjecture}%[section]
\newtheorem{fur}{Problem}
\theoremstyle{remark}
\newtheorem*{rem}{Remark}
\newtheorem*{note}{Note}
\newtheoremstyle{TheoremNum}
        {\topsep}{\topsep}              %%% space between body and thm
        {\itshape}                      %%% Thm body font
        {}                              %%% Indent amount (empty = no indent)
        {\bfseries}                     %%% Thm head font
        {.}                             %%% Punctuation after thm head
        { }                             %%% Space after thm head
        {\thmname{#1}\thmnote{ \bfseries #3}}%%% Thm head spec
    \theoremstyle{TheoremNum}
    \newtheorem{thmn}{Theorem}
\DeclareMathOperator{\id}{id}
\DeclareMathOperator{\tor}{Tor}
\DeclareMathOperator{\torord}{Tord}
\DeclareMathOperator{\ths}{th}
\DeclareMathOperator{\T}{T}
\DeclareMathOperator{\tf}{tf}
\DeclareMathOperator{\ord}{o}
\DeclareMathOperator{\prim}{prime}
\title{On torsion in finitely presented groups}
\author{Maurice Chiodo}
\date{\today}
\begin{document}

\let\thefootnote\relax\footnotetext{2010 \textit{AMS Classification:} 20F10, 03D40, 03D80.}
\let\thefootnote\relax\footnotetext{\textit{Keywords:} Higman's embedding theorem, universal finitely presented group, embeddings, Kleene's arithmetical hierarchy, torsion.}
\let\thefootnote\relax\footnotetext{\textit{The author was supported by:} a University of Melbourne Overseas Research Experience Scholarship, the Italian FIRB ``Futuro in Ricerca'' project RBFR10DGUA\_002, and the Swiss National Science Foundation grant FN PP00P2-144681/1.}

\begin{abstract}
We give a uniform construction that, on input of a recursive presentation $P$ of a group, outputs a recursive presentation of a torsion-free group, isomorphic to $P$ whenever $P$ is itself torsion-free. We use this to re-obtain a known result, the existence of a universal finitely presented torsion-free group; one into which all finitely presented torsion-free groups embed. We apply our techniques to show that recognising embeddability of finitely presented groups is $\Pi^{0}_{2}$-hard, $\Sigma^{0}_{2}$-hard, and lies in $\Sigma^{0}_{3}$. We also show that the sets of orders of torsion elements of finitely presented groups are precisely the $\Sigma^{0}_{2}$ sets which are closed under taking factors.
\end{abstract}

\maketitle

\section{Introduction}

By a finite presentation $\langle X|R \rangle$ of a group we mean, as usual, a finite collection of generators $X$, together with a finite set $R$ of defining relations. A recursive (resp.~countably generated recursive) presentation $\langle X|R \rangle$ of a group is then a finite (resp.~countable) collection of generators $X$, together with a recursive enumeration of a possibly infinite set $R$ of defining relations. We use $\overline{P}$ to denote the group presented by a presentation $P$.

The Higman embedding theorem \cite{Hig emb} shows that every recursively presented group embeds into a finitely presented group. Moreover, this embedding can be made \emph{uniform}; there is an algorithm that takes any recursive presentation $P$ and outputs a finite presentation $Q$ and an explicit embedding $\phi: \overline{P} \hookrightarrow \overline{Q}$. 
This embedding theorem was used by Higman to show the existence of a \emph{universal} finitely presented group; one into which all finitely presented groups embed. By analysing Higman's embedding theorem, we prove: %make the following non-obvious extension of this fact. %, first shown in \cite[Theorem A.1]{Bele}.
\\

\begin{thmn}[\ref{uni tor free}]
There is a universal finitely presented torsion-free group $G$. That is, $G$ is torsion-free, and for any finitely presented group $H$ we have that $H \hookrightarrow G$ if (and only if) $H$ is torsion-free.
\end{thmn}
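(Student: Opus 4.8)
The plan is to combine the $\tf$-construction (Theorem~4.2) with a torsion-free sharpening of Higman's embedding theorem. One implication is free: a subgroup of a torsion-free group is torsion-free, so any finitely presented $H$ with $H\hookrightarrow G$ is torsion-free when $G$ is.

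For the substantial direction I would imitate Higman's construction of a universal finitely presented group, but torsion-freely. Fix a recursive enumeration $P_1,P_2,\dots$ of all finite presentations over a fixed countable alphabet. Higman obtains his universal group by embedding the free product $P_1\ast P_2\ast\cdots$ --- which contains every finitely presented group as a free factor --- into a finitely presented group; but this free product has torsion as soon as some $P_i$ does, so it cannot serve here. Instead, apply the $\tf$-construction uniformly to obtain recursive presentations $P_1^{\tf},P_2^{\tf},\dots$, each of a torsion-free group, with $P_i^{\tf}\cong P_i$ whenever $P_i$ is torsion-free, and let $W$ be the free product of all the $P_i^{\tf}$ on pairwise disjoint generating sets. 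Uniformity makes $W$ recursively presented, and a free product of torsion-free groups is torsion-free (a torsion element of a free product is conjugate into a free factor), so $W$ is torsion-free. Every finitely presented torsion-free group $H$ is isomorphic to some $P_i$ with $P_i$ torsion-free, whence $H\cong P_i^{\tf}$, which embeds in $W$ as a free factor. Thus $W$ is a recursively presented torsion-free group containing an isomorphic copy of every finitely presented torsion-free group.

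It then suffices to embed $W$ into a finitely presented torsion-free group; equivalently, one needs a torsion-free version of Higman's embedding theorem, namely that every recursively presented torsion-free group embeds in a finitely presented torsion-free group. Granting this and applying it to $W$ gives a finitely presented torsion-free group $G$ with $W\hookrightarrow G$; then $G$ is torsion-free, and every finitely presented torsion-free group embeds in $W$, hence in $G$, so $G$ is universal.

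The real content --- and the step I expect to be the main obstacle --- is the torsion-free embedding theorem, which is where the promised analysis of Higman's proof comes in. Higman realises an embedding $P\hookrightarrow G$ as a finite tower of HNN extensions and amalgamated free products starting from $P$ and finitely generated free groups; at each stage the associated subgroups lie in the (torsion-free) group constructed so far, hence are torsion-free, and since the torsion elements of an HNN extension or of an amalgamated free product are precisely those conjugate into a vertex group, each stage preserves torsion-freeness. The work is to verify that, on torsion-free input $P$, no ingredient of Higman's machinery --- the coding of the recursive enumeration of relators, the benign-subgroup constructions, the auxiliary ``rope trick'' embeddings --- secretly introduces torsion, using the $\tf$-construction where convenient to replace the input presentation, or an auxiliary group, by a torsion-free one without disturbing the groups that matter. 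Once this bookkeeping is carried out, the universal group drops out exactly as above.
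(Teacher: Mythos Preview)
Your proposal is correct and matches the paper's proof: form $Q := P_1^{\tf} * P_2^{\tf} * \cdots$, note it is recursively presented, torsion-free, and contains every finitely presented torsion-free group, then embed it via a torsion-preserving Higman embedding. The paper packages that last step as its Theorem~2.2 (established in the author's earlier work), which gives the stronger conclusion that $\torord$ is preserved \emph{exactly} under the embedding; in particular the worry in your final paragraph is moot --- no $\tf$-surgery inside the Higman machinery is needed, since the construction already carries torsion-free input to torsion-free output.
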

\mbox{}

Theorem \ref{uni tor free} first appeared (as far as we are aware) in the appendix by Oleg V. Belegradek of the paper \cite{Bele}, Theorem A.1. He gives a different proof to ours, making use of arguments from model theory. Moreover, in \cite[Remark A.2]{Bele} he points out that theorem \ref{uni tor free} can also be proved along the lines we follow in the present paper.

Key to many of the important results in this work is the technical observation that the Higman embedding theorem can preserve the set of orders of torsion elements; we state this as theorem \ref{tor emb}. 
Every group $G$ has a unique torsion-free quotient through which all other torsion-free quotients factor (see corollary \ref{tf quot}); we call this the \emph{torsion-free universal quotient} $G^{\tf}$. By standard techniques in combinatorial group theory, we show in proposition \ref{tkill} the existence of an algorithm that takes any finite presentation $P$ and outputs a recursive presentation $P^{\tf}$ of the torsion-free universal quotient of $\overline{P}$. Theorem \ref{uni tor free} then follows by combining theorem \ref{tor emb} and proposition \ref{tkill}, similar to Higman's original construction of a universal finitely presented group.

In \cite{Lempp} it was shown by Lempp that the problem of recognising torsion-freeness for finitely presented groups is $\Pi^{0}_{2}$-complete in Kleene's arithmetic hierarchy (see \cite{Rogers}, or the introduction to \cite{Lempp}, for a description of $\Sigma_{n}^{0}$ sets, $\Pi_{n}^{0}$ sets, and Kleene's arithmetic hierarchy).  
Therefore the set of finitely presented subgroups of any universal torsion-free finitely presented group is 
$\Pi_{2}^{0}$-complete, and, in particular, not recursively enumerable. In \cite{Chiodo} we gave another proof of the existence of a finitely presented group whose set of finitely presented subgroups is not recursively enumerable, without the use of the results of Lempp \cite{Lempp} or Oleg Belegradek \cite{Bele}. Building on theorem \ref{uni tor free}, we show the following.
\\ 

\begin{thmn}[\ref{main1}]
For any recursive enumeration $P_{1}, P_{2}, \ldots$ of all finite presentations of groups, the set $K=\{ (i , j) \in \mathbb{N}^{2}\ |\ \overline{P}_{i} \hookrightarrow \overline{P}_{j}\}$ is $\Sigma^{0}_{2}$-hard, $\Pi^{0}_{2}$-hard, and has a $\Sigma^{0}_{3}$ description.
\end{thmn}
\mbox{}

We write $\torord(G)$ to denote the orders of non-trivial torsion elements of a group $G$, and say a set $A \subseteq \mathbb{N}$ is \emph{factor-complete} if it is closed under taking multiplicative factors (excluding $1$). Applying theorem \ref{tor emb} to an idea by Dorais in the comments to \cite{MO Wilton}, we give the following complete characterisation of sets which can occur as $\torord(G)$ for $G$ a finitely (or recursively) presented group:
\\

\begin{thmn}[\ref{torord equivalence}]
For a set of natural numbers $A$ the following are equivalent:
\\$(1)$ $A=\torord(G)$ for some finitely presented group $G$;
\\$(2)$ $A=\torord(G)$ for some countably generated recursively presented group $G$;
\\$(3)$ $A$ is a factor-complete $\Sigma_{2}^{0}$ set.
\end{thmn}
\mbox{}

It follows (corollary \ref{any}) that we can realise any $\Sigma_{2}^{0}$ set, up to one-one equivalence, as $\torord(G)$ for some finitely presented group $G$.
\\

\noindent \textbf{Acknowledgements:} The author wishes to thank Jack Button, Andrew Glass, Steffen Lempp, Vincenzo Marra, Chuck Miller and Rishi Vyas for their many useful conversations and comments which led to the overall improvement of this work. Thanks also go to Fran\c{c}ois Dorais, Stefan Kohl, Benjamin Steinberg and Henry Wilton for their thoughtful discussion on MathOverflow, which led to the addition of section \ref{comp torord} to this work. We thank Igor Belegradek for bringing the work \cite{Bele} to our attention, and to the anonymous referee for suggesting improvements. Finally, thanks must go to the late Greg Hjorth, whose suggestion of making contact with Steffen led to the eventual writing of this work.

\section{Preliminaries}\label{dunno}

\subsection{Notation}\mbox{}

\noindent With the convention that $\mathbb{N}$ contains $0$, we write $\varphi_{m}$ to be the $m^{\ths}$ \emph{partial recursive function} $\varphi_{m}: \mathbb{N} \to \mathbb{N}$, and the domain of $\varphi_{m}$ to be the $m^{\ths}$ \emph{partial recursive set} $W_{m} $  (also known as a \emph{recursively enumerable set}, abbreviated to r.e.~set). A presentation $P=\langle X|R\rangle$ is said to be a \emph{countably generated recursive presentation} if $X$ is a recursive enumeration of generators, and $R$ a recursive enumeration of relators. If $P,Q$ are group presentations then we denote their free product presentation by $P*Q$, given by taking the disjoint union of their generators and relators; this extends to the free product of arbitrary collections of presentations.
If $X$ is a set, we write $X^{*}$ for the set of finite words on $X \cup X^{-1}$, including the empty word $\emptyset$.  If $\phi: X \to Y^{*}$ is a set map, then we write $\overline{\phi}: X^{*} \to Y^{*}$ for the extension of $\phi$ to $X^{*}$.  If $g_{1}, \ldots, g_{n}$ are elements of a group $G$, then we write $\langle   g_{1}, \ldots, g_{n} \rangle^{G}$ for the subgroup in $G$ generated by these elements, and $\llangle g_{1}, \ldots, g_{n} \rrangle^{G}$ for the normal closure of these elements in $G$. \emph{Cantor's pairing function} is defined by $\left\langle \cdot , \cdot \right\rangle : \mathbb{N}\times\mathbb{N} \to \mathbb{N}$, $\left\langle x,y\right\rangle := \frac{1}{2}(x+y)(x+y+1)+y$, which gives a computable bijection.

\subsection{Embedding theorems}

\begin{defn}
Let $G$ be a group. We let $\ord(g)$ denote the order of a group element $g$, and say $g$ is \emph{torsion} if $1\leq \ord(g)<\infty$. We set 
\[
\tor(G):=\{g \in G\ |\ g \textnormal{ is torsion}\}
\] 
\[
\torord(G):=\{n \in \mathbb{N}\ |\ \exists g \in \tor(G) \textnormal{ with } \ord(g)=n\geq 2\}
\]
\end{defn}

\noindent Thus $\torord(G)$ is the set of orders of non-trivial torsion elements of $G$.

As detailed in \cite[Lemma 6.9 and Theorem 6.10]{Chiodo}, the following is implicit in Rotman's proof \cite[Theorem 12.18]{Rot} of the Higman embedding theorem.

\begin{thm}\label{tor emb}
There is a uniform algorithm that, on input of a countably generated recursive presentation $P=\langle X | R \rangle$, constructs a finite presentation $\T(P)$ such that $\overline{P}\hookrightarrow \overline{\T(P)}$ and  $\torord(\overline{P}) =\torord(\overline{\T(P)})$, along with an explicit embedding $\overline{\phi}: \overline{P} \hookrightarrow \overline{\T(P)}$.
\end{thm}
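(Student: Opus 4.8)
The plan is to revisit Higman's embedding machinery and track exactly what happens to torsion elements at each stage. Recall that the standard proof of the Higman embedding theorem proceeds through a sequence of well-understood group-theoretic operations: one encodes the recursively enumerable set of relators $R$ via a Turing machine (or a Minsky machine / modular machine) whose halting behaviour is captured by a finitely presented group, and then one performs a chain of HNN extensions and amalgamated free products to glue this computational data onto a free group so that the normal closure of the encoded relators is realised. The key point I would exploit is that \emph{each} of these operations — free products, amalgamated free products over subgroups, and HNN extensions with associated subgroups — preserves torsion in a controlled way: by Britton's lemma and the torsion theorem for HNN extensions, a torsion element of an HNN extension is conjugate into the base group, and similarly for amalgams a torsion element is conjugate into one of the factors. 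Hence no new orders of torsion elements are introduced by any of these steps, and $\torord$ of the final group equals the union of $\torord$ over the building blocks.

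First I would set up the construction so that the only "new" building block beyond free and HNN/amalgam operations is the finitely presented group $\mathcal{M}$ encoding the Turing machine; I would then verify (or cite from \cite{Chiodo}, since the statement attributes this to Lemma 6.9 and Theorem 6.10 there) that $\mathcal{M}$ can be chosen torsion-free, so that $\torord(\mathcal{M}) = \emptyset$. Second, I would check that the free group $F(X)$ on the generators of $P$, and any auxiliary free groups introduced, are torsion-free. Third, I would invoke the torsion structure theorems for HNN extensions and amalgamated products at each stage of Higman's tower to conclude that $\torord$ of the ambient group at every stage equals $\torord(\overline{P})$ together with $\torord$ of the torsion-free auxiliary pieces, i.e. just $\torord(\overline{P})$. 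Finally, since $\overline{P} \hookrightarrow \overline{\T(P)}$, we certainly have $\torord(\overline{P}) \subseteq \torord(\overline{\T(P)})$, and the reverse inclusion follows from the stagewise analysis; combined with the uniformity of Higman's algorithm (which yields $\T(P)$ and the explicit embedding $\overline{\phi}$ algorithmically from $P$), this gives the theorem.

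The main obstacle will be the bookkeeping: Higman's embedding is not a single clean amalgam but a somewhat intricate sequence of steps (the "Higman rope trick", the benign subgroup lemmas, and the final coherence argument), and one must be careful that at each stage the subgroups being amalgamated or made associated are themselves such that no torsion is created or destroyed — in particular that the associated subgroups in the HNN extensions are free (or otherwise torsion-free), so that Britton's lemma gives exactly the conjugacy-into-the-base conclusion with no collapse of orders. A secondary subtlety is ensuring that the embedding $\overline{P} \hookrightarrow \overline{\T(P)}$ really is order-preserving on the nose (an element of order $k$ maps to an element of order exactly $k$, not a proper divisor), which is automatic for an injective homomorphism but worth stating, and that conversely every torsion element of $\overline{\T(P)}$ has order already realised in $\overline{P}$. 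Since the hard analytic content is precisely the content of \cite[Lemma 6.9 and Theorem 6.10]{Chiodo}, in this paper the proof amounts to recalling that statement and noting it delivers exactly the asserted uniform algorithm, explicit map $\phi$, embedding $\overline{\phi}$, and equality $\torord(\overline{P}) = \torord(\overline{\T(P)})$.
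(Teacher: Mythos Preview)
Your proposal is correct, and you have in fact identified the situation exactly in your final paragraph: the paper provides no proof of this theorem at all, but merely imports it from \cite[Lemma 6.9 and Theorem 6.10]{Chiodo}, so in this paper the ``proof'' is precisely the citation you name. Your extended sketch of the underlying argument---tracking torsion through the Higman tower via the torsion theorems for HNN extensions and amalgamated products, and checking that the auxiliary machine group and all associated subgroups are torsion-free---is indeed the strategy carried out in the cited reference, so there is nothing to add.
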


We will also use the following consequence to theorem \ref{tor emb}.

\begin{thm}[{\cite[Lemma 6.11]{Chiodo}}]\label{no re tor}
There is a uniform algorithm that, on input of any $n \in \mathbb{N}$, constructs a finite presentation $Q_{n}$ such that $\torord(\overline{Q}_{n})$ is one-one equivalent to $\mathbb{N}\setminus W_{n}$. Taking $n'$ with $W_{n'}$ non-recursive thus gives that $\torord(\overline{Q}_{n'})$ is not recursively enumerable; thus the set of finitely presented subgroups of $\overline{Q}_{n'}$ is not recursively enumerable.
\end{thm}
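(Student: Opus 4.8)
The plan is to encode $\mathbb{N}\setminus W_{n}$ into the torsion spectrum of a recursively presented group and then feed that group to Theorem~\ref{tor emb} to replace it by a finite presentation without disturbing its torsion orders. Write $W_{n}=\operatorname{dom}\varphi_{n}$, let $p_{0}<p_{1}<\cdots$ enumerate the primes, and form, uniformly in $n$, the countably generated recursive presentation
\[
P_{n}:=\big\langle\, x_{0},x_{1},x_{2},\ldots \ \big|\ x_{i}^{p_{i}}\ (i\ge 0),\quad x_{i}\ (\text{once }i\text{ is enumerated into }W_{n})\,\big\rangle .
\]
Its relator set is recursively enumerable (list $x_{0}^{p_{0}},x_{1}^{p_{1}},\ldots$ while dovetailing $\varphi_{n}$ on all inputs and emitting $x_{i}$ as soon as $\varphi_{n}(i)\!\downarrow$), so this is a legitimate countably generated recursive presentation, produced uniformly from $n$. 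Killing the $x_{i}$ with $i\in W_{n}$ is a sequence of Tietze moves, hence $\overline{P}_{n}\cong\ast_{\,i\notin W_{n}}\mathbb{Z}/p_{i}\mathbb{Z}$.

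First I would compute $\torord(\overline{P}_{n})$. By the normal form theorem for free products (equivalently, Kurosh's subgroup theorem), every element of finite order $>1$ in a free product is conjugate into a free factor, so $\torord\big(\ast_{\lambda}G_{\lambda}\big)=\bigcup_{\lambda}\torord(G_{\lambda})$; since $\torord(\mathbb{Z}/p_{i}\mathbb{Z})=\{p_{i}\}$ this gives $\torord(\overline{P}_{n})=\{p_{i}:i\notin W_{n}\}$. Now put $Q_{n}:=\T(P_{n})$ via Theorem~\ref{tor emb}: this is a finite presentation, obtained uniformly from $P_{n}$ and hence from $n$, with $\overline{P}_{n}\hookrightarrow\overline{Q}_{n}$ and $\torord(\overline{Q}_{n})=\torord(\overline{P}_{n})=\{p_{i}:i\notin W_{n}\}$.

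It then remains to locate $T:=\torord(\overline{Q}_{n})$. The map $i\mapsto p_{i}$ is a one-one reduction of $\mathbb{N}\setminus W_{n}$ to $T$, while the complement of $T$, namely the non-primes together with $\{p_{i}:i\in W_{n}\}$, is recursively enumerable; so $T$ is one-one equivalent to $\mathbb{N}\setminus W_{n}$, and in particular $T$ is co-r.e.\ and is recursive precisely when $W_{n}$ is. Choosing $n'$ with $W_{n'}$ non-recursive makes $T':=\torord(\overline{Q}_{n'})$ a co-r.e., non-recursive set, hence not recursively enumerable. Finally, suppose the finitely presentable subgroups of $\overline{Q}_{n'}$ were recursively enumerable, say via an r.e.\ list of finite presentations $R_{1},R_{2},\ldots$ equipped with explicit embeddings $\overline{R}_{j}\hookrightarrow\overline{Q}_{n'}$ and exhausting all such subgroups up to isomorphism. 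Since $T'\subseteq\{\text{primes}\}$, for a prime $p$ we have $p\in T'$ iff $\mathbb{Z}/p\mathbb{Z}$ embeds in $\overline{Q}_{n'}$ iff $\overline{R}_{j}\cong\mathbb{Z}/p\mathbb{Z}$ for some $j$; and because $\mathbb{Z}/p\mathbb{Z}$ is \emph{finite}, ``$\overline{R}_{j}\cong\mathbb{Z}/p\mathbb{Z}$'' is a $\Sigma^{0}_{1}$ condition: one searches for a pair of mutually inverse homomorphisms between $\overline{R}_{j}$ and $\mathbb{Z}/p\mathbb{Z}$, finiteness of $\mathbb{Z}/p\mathbb{Z}$ making the maps into it finitely describable and each of the resulting (finitely many) equalities positively verifiable. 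This would make $T'$ recursively enumerable, a contradiction, which completes the argument.

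The step I expect to carry the weight is that last implication — deducing ``the finitely presentable subgroups are not r.e.'' from ``$\torord$ is not r.e.'' — since it is the one place that needs a genuine idea, namely that embeddability of one \emph{fixed finite} group is semi-decidable from a presentation; everything preceding it is the standard computation of torsion in free products, together with the torsion-preserving Higman embedding supplied as a black box by Theorem~\ref{tor emb}, plus routine checking that each construction is uniform in $n$.
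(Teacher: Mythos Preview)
The paper does not itself prove this theorem; it is quoted from \cite[Lemma 6.11]{Chiodo} without proof. Your construction --- the free product of cyclic groups $\mathbb{Z}/p_{i}\mathbb{Z}$ with the $i$-th factor trivialised once $i$ enters $W_{n}$, followed by the torsion-preserving Higman embedding of Theorem~\ref{tor emb} --- is exactly the intended one, and matches the parallel constructions the paper does spell out in the proof of Theorem~\ref{Cm embeds} and in Lemma~\ref{prime trick}.

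Two small points. First, your one-one equivalence argument is incomplete: you exhibit $\mathbb{N}\setminus W_{n}\leq_{1}T$ via $i\mapsto p_{i}$, but the observation that $\mathbb{N}\setminus T$ is r.e.\ does not by itself yield $T\leq_{1}\mathbb{N}\setminus W_{n}$. For the reverse reduction send a prime $p_{i}$ to $i$ and inject the non-primes into an enumeration of $W_{n}$ (padding as needed); this works once $W_{n}$ is infinite, and the finite case --- where both sets are recursive --- is irrelevant to the application. Second, in the final paragraph the clause ``equipped with explicit embeddings'' is superfluous and slightly nonstandard as a reading of ``the finitely presentable subgroups are r.e.''; your $\Sigma^{0}_{1}$ detection of $\overline{R}_{j}\cong\mathbb{Z}/p\mathbb{Z}$ nowhere uses an embedding into $\overline{Q}_{n'}$, so you may simply drop that hypothesis.
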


\section{Universal finitely presented torsion-free groups}\label{uni groups}

If $G,H$ are groups with $H$ torsion-free, a surjective homomorphism $h: G \twoheadrightarrow H$ is \emph{universal} if, for any  torsion-free $K$ and any homomorphism $f:G \to K$, there is a homomorphism $\phi:H \to K$ such that $f=\phi \circ h: G \to K$, i.e., the following diagram commutes:
\begin{displaymath}
\xymatrix{
G \ar[r]^{h} \ar[dr]_{f} & H \ar[d]^{\phi} \\
 & K
}
\end{displaymath}
Note that if $\phi$ exists then it will be unique. Indeed, if $\phi'$ also satisfies $f=\phi' \circ h$, then $\phi \circ h = \phi' \circ h$, and hence $\phi=\phi'$ as $h$ is a surjection and thus is right-cancellative. Moreover, any such $H$ is unique, up to isomorphism. Such an $H$ is called the \emph{universal torsion-free quotient} for $G$, denoted $G^{\tf}$. Observe that if $G$ is itself torsion-free, then $G^{\tf}$ exists and $G^{\tf} \cong G$, as the identity map $\id_{G}: G \to G$ has the universal property above.

A standard construction, showing that $G^{\tf}$ exists for every group $G$, is done via taking the quotient of $G$ by its \emph{torsion-free radical} $\rho(G)$, where $\rho(G)$ is the intersection of all normal subgroups $N\vartriangleleft G$ with $G/N$ torsion-free (see \cite{BrodHow}). It follows immediately that $G/\rho(G)$ has all the properties of a torsion-free universal quotient for $G$. 

We present here an alternative construction for $G^{\tf}$ which, though isomorphic to $G/\rho(G)$, lends itself more easily to an effective procedure for finitely (or recursively) presented groups, as shown in proposition \ref{tkill}.

\begin{defn}
Given a group $G$, we inductively define $\tor_{i}(G)$ as follows:
\[
\tor_{0}(G):=\{e\}
\]
\[
\tor_{i+1}(G):=\llangle\ \{g \in G\ |\ g\tor_{i}(G) \in \tor \big( G/\tor_{i}(G)\big) \}\ \rrangle ^{G}
\]
\[
\tor_{\infty}(G):=\bigcup_{i \in \mathbb{N}}\tor_{i}(G)
\]
Thus, $\tor_{i}(G)$ is the set of elements of $G$ which are annihilated upon taking $i$ successive quotients of $G$ by the normal closure of all torsion elements, and $\tor_{\infty}(G)$ is the union of all these. 
\end{defn}

By construction, $\tor_{i}(G) \leq  \tor_{j}(G)$ whenever $i \leq j$. It follows immediately that $\tor_{\infty}(G)\vartriangleleft G$. The finite presentation $P:=\langle x,y,z | x^2, y^3, xy=z^6 \rangle$ defines a group for which $\tor_{1}(\overline{P}) \neq \tor_{\infty}(\overline{P})$, as shown in \cite[Proposition 4.1]{Chi Vya}.

\begin{lem}\label{infinity tf}
If $G$ is a group, then $G/\tor_{\infty}(G)$ is torsion-free. 
\end{lem}

\begin{proof}
Suppose $g\tor_{\infty}(G) \in \tor \big(G/\tor_{\infty}(G)\big)$. Then $g^{n}\tor_{\infty}(G) =e$ in $G/\tor_{\infty}(G)$ for some $n>1$, so $g^{n} \in \tor_{\infty}(G)$. Thus there is some $i \in \mathbb{N}$ such that $g^{n} \in \tor_{i}(G)$, and hence $g\tor_{i}(G) \in \tor\big(G/\tor_{i}(G)\big)$. Thus $g \in \tor_{i+1}(G) \subseteq \tor_{\infty}(G)$, and so $g\tor_{\infty}(G) =e$ in $G/\tor_{\infty}(G)$.
\end{proof}

\begin{prop}
If $G$ is a group, then $\rho(G)=\tor_{\infty}(G)$.
\end{prop}

\begin{proof}
Clearly $\rho(G) \subseteq \tor_{\infty}(G) $, by definition of $\rho(G)$ and the fact that $G/\tor_{\infty}(G)$ is torsion-free (lemma \ref{infinity tf}). It remains to show that $\tor_{\infty}(G) \subseteq \rho(G)$. We proceed by contradiction, so assume $\tor_{\infty}(G) \nsubseteq \rho(G)$. Then there is some $N\vartriangleleft G$ with $G/N$ torsion-free, along with some minimal $i$ such that $\tor_{i}(G) \nsubseteq N$ (clearly, $i>0$, as $\tor_{0}(G) =\{e\}$). Then, by definition of $\tor_{i}(G)$ and the fact that $N$ is normal, there exists $e \neq g \in \tor_{i}(G)$ such that $g\tor_{i-1}(G) \in \tor \big( G/\tor_{i-1}(G)\big)$ and $g \notin N$ (or else $\tor_{i}(G) \subseteq N$). But then $g^{n} \in \tor_{i-1}(G)$ for some $n>1$. Since $\tor_{i-1}(G) \subseteq N$ by minimality of $i$, we have that $gN$ is a (non-trivial) torsion element of $G/N$, contradicting torsion-freeness of $G/N$. Hence $\tor_{\infty}(G) \subseteq \rho(G)$.
\end{proof}

\begin{cor}\label{tf quot}
If $G$ is a group, then $G/\tor_{\infty}(G)=G^{\tf}$; the torsion-free universal quotient for $G$.
\end{cor}

What follows is a standard result, which we state without proof.

\begin{lem}\label{words}
Let $P=\langle X | R \rangle$ be a countably generated recursive presentation. Then the set of words $\{w \in X^{*}|\ w =e \textnormal{ in } \overline{P}\}$ is r.e. 
\end{lem}

\begin{lem}\label{tor re}
Let $P=\langle X | R \rangle$ be a countably generated recursive presentation. Then the set of words $\{w \in X^{*}|\ w \in \tor(\overline{P}) \textnormal{ in } \overline{P}\}$ is r.e.
\end{lem}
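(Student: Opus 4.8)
The plan is to reduce to the classical fact that the word problem of a countably generated recursive presentation is recursively enumerable, and then to present $\tor(\overline{P})$ as a projection of an r.e.\ set.

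First I would recall why $W(P):=\{w \in X^{*}\ |\ w =_{\overline{P}} e\}$ is r.e., uniformly in $P$. Since $X$ is a recursive enumeration of generators and $R$ a recursive enumeration of relators, one can recursively enumerate the normal closure of $R$ in the free group on $X$: dovetail over all finite tuples of relators $r_{1},\ldots,r_{k}$ occurring in the enumeration of $R$, all finite tuples $u_{1},\ldots,u_{k}\in X^{*}$, and all sign sequences $\varepsilon_{1},\ldots,\varepsilon_{k}\in\{\pm1\}$, and at each stage emit the freely reduced form of $u_{1}r_{1}^{\varepsilon_{1}}u_{1}^{-1}\cdots u_{k}r_{k}^{\varepsilon_{k}}u_{k}^{-1}$. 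A word $w$ represents the identity in $\overline{P}$ precisely when its freely reduced form eventually appears in this list, so $W(P)$ is r.e.

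Next, observe that $w\in\tor(\overline{P})$ if and only if $w^{n}=_{\overline{P}}e$ for some $n\geq1$, so
\[
\{w \in X^{*}\ |\ w \in \tor(\overline{P})\}=\{w\in X^{*}\ |\ \exists\, n\geq1\ \ w^{n}\in W(P)\}.
\]
To enumerate the right-hand side, dovetail over all pairs $(w,n)\in X^{*}\times\mathbb{N}$ with $n\geq1$ and, for each such pair, run the semi-decision procedure for $W(P)$ on the word $w^{n}$; whenever one of these searches halts successfully, output $w$. This produces exactly the torsion words, so the set is r.e.

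I do not expect any genuine obstacle here — which is why the lemma is flagged as standard. The sole substantive ingredient is the semi-decidability of the word problem for recursive presentations, and the rest is a routine dovetailing/projection argument. The only minor point worth checking is the \emph{countably generated} case, where $X$ is merely recursively enumerated rather than finite: since every word $w\in X^{*}$ involves only finitely many generators, one simply incorporates the enumeration of $X$ into the dovetailing used to list candidate words and conjugators, and the argument goes through verbatim.
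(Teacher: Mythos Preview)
Your argument is correct and is exactly the standard one: the word problem of a countably generated recursive presentation is semi-decidable by enumerating the normal closure of $R$ in $F(X)$, and the torsion words are then obtained as the projection $\{w\ |\ \exists n\geq 1,\ w^{n}\in W(P)\}$ via dovetailing. The paper in fact gives no proof of this lemma at all --- it is introduced with the phrase ``What follows is a standard result'' and left unproved --- so there is nothing to compare against; your write-up simply fills in the omitted details.
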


\begin{proof}
Take any recursive enumeration $\{w_{1}, w_{2}, \ldots \}$ of $X^{*}$. Using lemma \ref{words}, start checking if $w_{i}^{n}=e$ in $\overline{P}$ for each $w_{i} \in X^{*}$ and each $n \in \mathbb{N}$ (by proceeding along finite diagonals). For each $w_{i}$ we come across which has a finite order, add it to our enumeration. This procedure will enumerate all words in $\tor(\overline{P})$, and only words in $\tor(\overline{P})$. Thus the set of words in $X^{*}$ representing elements in $\tor(\overline{P})$ is r.e.
\end{proof}

From this, we deduce the following:

\begin{lem}\label{tor i re}
Given a countably generated recursive presentation $P=\langle X | R \rangle$, the set $T_{i}:=\{w \in X^{*}|\ w\in \tor_{i}(\overline{P}) \textnormal{ in } \overline{P}\}$ is r.e., uniformly over all $i$ and all such presentations $P$. Moreover, the union $T_{\infty}:=\bigcup T_{i}$ is r.e., and is precisely the set $\{w \in X^{*}|\ w \in \tor_{\infty}(\overline{P}) \textnormal{ in } \overline{P}\}$.
\end{lem}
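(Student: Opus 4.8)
The plan is to induct on $i$, using Lemma \ref{tor re} as the base of the construction, and then to dovetail everything together to handle $T_\infty$. First, recall that $T_0 = \{w \in X^* \mid w =_{\overline{P}} e\}$, which is r.e.\ since $P$ is a recursive presentation. For the inductive step, suppose I have a recursive enumeration of $T_i$. The group $\overline{P}/\tor_i(\overline{P})$ has a countably generated recursive presentation $P_i := \langle X \mid R, T_i \rangle$ — here I am simply adjoining the (r.e.) set $T_i$ of relators to $R$, and this is uniformly constructible from an enumeration of $R$ and of $T_i$. Now apply Lemma \ref{tor re} to $P_i$: the set of words representing torsion elements of $\overline{P_i} = \overline{P}/\tor_i(\overline{P})$ is r.e., uniformly. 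By definition, $\tor_{i+1}(\overline{P})$ is the normal closure in $\overline{P}$ of the preimages of these torsion elements, so $T_{i+1}$ is the set of words in $X^*$ equal in $\overline{P}$ to a product of conjugates of (i) elements of that r.e.\ set of torsion-words of $\overline{P_i}$, and (ii) the original relators $R$ — but adjoining $R$ is automatic since we are working modulo $\overline{P}$. Concretely, $T_{i+1} = \{w \in X^* \mid w =_{\overline{P}} e \text{ in } \langle X \mid R, T_i, \{\text{torsion-words of } \overline{P_i}\}\rangle\}$, i.e.\ $T_{i+1}$ is the word problem relators of the presentation obtained by adjoining to $P_i$ its own r.e.\ set of torsion-words. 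Since the word problem of a recursive presentation is r.e., and all the ingredients are r.e.\ uniformly, $T_{i+1}$ is r.e.\ uniformly in $i$ and in $P$.

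For the "moreover" clause, $T_\infty = \bigcup_i T_i$ is r.e.\ because a countable union of uniformly r.e.\ sets is r.e.: run the enumeration procedures for $T_0, T_1, T_2, \ldots$ in a dovetailed fashion (at stage $n$, run $\lfloor\sqrt n\rfloor$ steps of the first $\lfloor\sqrt n\rfloor$ of these procedures, say), and output every word that appears in any of them. That $T_\infty$ is exactly $\{w \in X^* \mid w \in \tor_\infty(\overline{P})\}$ is immediate from the definition $\tor_\infty(\overline{P}) = \bigcup_i \tor_i(\overline{P})$ together with the inductive identification $T_i = \{w \in X^* \mid w \in \tor_i(\overline{P})\}$, which is precisely what the induction establishes: a word $w$ lies in $\tor_\infty(\overline{P})$ iff it lies in some $\tor_i(\overline{P})$ iff $w \in T_i$ for some $i$ iff $w \in T_\infty$.

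The main obstacle — and really the only delicate point — is making the induction genuinely uniform: I need an index for the enumeration of $T_{i+1}$ computable from an index for the enumeration of $T_i$ (and from $P$), so that the dovetailing in the last step makes sense. This is where one has to be slightly careful, since the construction of $P_i$ from $T_i$ and the application of Lemma \ref{tor re} to $P_i$ must both be carried out by an explicit algorithm; fortunately Lemma \ref{tor re} is already asserted (and the standard proof is) uniform in the input presentation, and passing from an enumeration of $R$ and $T_i$ to an enumeration of the relators of $P_i$ is trivial, so the recursion theorem (or simply a direct bookkeeping argument) yields a single algorithm producing, on input $i$ and $P$, an index for $T_i$. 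Everything else — the base case, the dovetailing, and the set-theoretic identification of $T_\infty$ — is routine.
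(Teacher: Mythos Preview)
Your proof is correct and follows essentially the same approach as the paper's: induction on $i$, using Lemma \ref{tor re} applied to the quotient presentation $\langle X \mid R, T_i\rangle$ to pass from $T_i$ to $T_{i+1}$, and then observing that the union $T_\infty$ is r.e.\ as a uniform union of r.e.\ sets. The paper's proof is terser---it simply says ``$\tor_{n+1}(\overline{P})$ is the normal closure of $\tor(\overline{P}/\tor_n(\overline{P}))$, which again is r.e.~by the induction hypothesis and lemma \ref{tor re}'' and dismisses the rest as immediate---whereas you spell out the formation of $P_i$, the dovetailing for $T_\infty$, and the uniformity bookkeeping explicitly; but the underlying argument is the same.
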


\begin{proof}
We proceed by induction. Clearly $\tor_{1}(\overline{P})$ is r.e., as it is the normal closure of $\tor(\overline{P})$, which is r.e.~by lemma \ref{tor re}. So assume that $\tor_{i}(\overline{P})$ is r.e.~for all $i \leq n$. Then $\tor_{n+1}(\overline{P})$ is the normal closure of $\tor(\overline{P}/\tor_{n}(\overline{P}))$, which again is r.e.~by the induction hypothesis and lemma \ref{tor re}. The rest of the lemma then follows immediately.
\end{proof}

\begin{prop}\label{tkill}
There is a uniform algorithm that, on input of a countably generated recursive presentation $P=\langle X | R \rangle$ of a group $\overline{P}$, outputs a countably generated recursive presentation $P^{\tf}=\langle X | R' \rangle$ (on the same generating set $X$, and with $R\subseteq R'$ as sets) such that $\overline{P^{\tf}}$ is the torsion-free universal quotient of $\overline{P}$, with associated surjection given by extending $\id_{X}: X \to X$.
\end{prop}

\begin{proof}
By corollary \ref{tf quot}, $\overline{P}^{\tf}$ is the group $\overline{P}/\tor_{\infty}(\overline{P})$. Then, with the notation lemma \ref{tor i re}, it can be seen that $P^{\tf}:=\langle X | R \cup T_{\infty} \rangle$ is a countably generated recursive presentation for $\overline{P}^{\tf}$, uniformly constructed from $P$.
\end{proof}

\begin{thm}\label{uni cbl tor free}
There is a finitely  presentable group $G$ which is torsion free, and contains an embedded copy of every countably generated recursively presentable torsion-free group.
\end{thm}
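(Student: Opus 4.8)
The plan is to mimic the proof of Theorem \ref{uni tor free} almost verbatim; the only new ingredient needed is an effective enumeration of \emph{all} countably generated recursive presentations, rather than just of the finite ones. Once such an enumeration is in hand, the argument — free-product all of their torsion-free universal quotients, then apply Higman's order-preserving embedding — is exactly the one already used above.

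First I would fix, uniformly in $j \in \mathbb{N}$, a countably generated recursive presentation $Q_{j}$ as follows. Give $Q_{j}$ the recursively enumerable generating set $X_{j} := \{x_{j,k} \mid k \in \mathbb{N}\}$, and let its relator set consist of all words $w \in X_{j}^{*}$ obtained by dovetailing the computation of the $j^{\ths}$ partial recursive function $\varphi_{j}$ and interpreting its outputs, via a fixed computable coding of $X_{j}^{*}$ by $\mathbb{N}$, as words in the generators $X_{j}$. Each $Q_{j}$ is then a countably generated recursive presentation, produced uniformly from $j$. The crucial point is that this family exhausts, up to isomorphism, all countably generated recursively presentable groups: if $H \cong \overline{\langle X \mid S \rangle}$ for some countably generated recursive presentation $\langle X \mid S \rangle$, then (padding $X$ with unused free generators if it happens to be finite) we may relabel $X$ as $\{x_{j,k}\}_{k}$ and choose $j$ so that $\varphi_{j}$ enumerates $S$ under this relabelling, giving $\overline{Q_{j}} \cong H$. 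I expect the only real point needing care to be precisely this bookkeeping: setting up the uniform list cleanly, and checking that adding spurious free generators, or interpreting ``garbage'' outputs of a non-total $\varphi_{j}$, does no harm (it only replaces $\overline{Q_j}$ by a free product of it with a free group, which is torsion-free iff $\overline{Q_j}$ is).

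Next I would form the countably generated recursive presentation $R := Q_{1}^{\tf} * Q_{2}^{\tf} * \cdots$. By Theorem \ref{tkill} each $Q_{j}^{\tf}$ is uniformly constructible from $Q_{j}$, hence from $j$, so $R$ is again a single countably generated recursive presentation: its generating set is the recursive set $\{x_{j,k} \mid j,k \in \mathbb{N}\}$ and its relator set is the recursively enumerable union of the (uniformly) recursively enumerable relator sets of the $Q_{j}^{\tf}$. By Theorem \ref{tkill} each $\overline{Q_{j}^{\tf}}$ is torsion-free, and a free product of torsion-free groups is torsion-free, so $\overline{R}$ is torsion-free; in particular $\torord(\overline{R}) = \emptyset$.

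Finally I would apply Theorem \ref{tor emb} to embed $\overline{R}$ into a finitely presentable group $\overline{\T(R)}$ with $\torord(\overline{\T(R)}) = \torord(\overline{R}) = \emptyset$, so that $\overline{\T(R)}$ is torsion-free; set $G := \overline{\T(R)}$. For universality, let $H$ be any countably generated recursively presentable torsion-free group. By the second step $H \cong \overline{Q_{j}}$ for some $j$, and since $H$ is torsion-free, Proposition \ref{unchanged rho} gives $\overline{Q_{j}^{\tf}} \cong \overline{Q_{j}} \cong H$. But $\overline{Q_{j}^{\tf}}$ is a free factor of $\overline{R}$, hence embeds in $\overline{R}$, which embeds in $G$; thus $H \hookrightarrow G$. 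The one genuine obstacle, as noted, is the first step; everything after it is the proof of Theorem \ref{uni tor free}.
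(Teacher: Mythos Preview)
Your proposal is correct and follows exactly the approach the paper takes: the paper's own proof simply says to replace the enumeration of finite presentations in the proof of Theorem~\ref{uni tor free} by an enumeration of all countably generated recursive presentations and then repeat that argument verbatim. You have merely supplied the details of how to produce such an enumeration uniformly (and correctly flagged the one bookkeeping subtlety, that padding with free generators may replace $H$ by $H*F$, which still contains $H$ and is still torsion-free), so nothing further is needed.
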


\begin{proof}
Take an enumeration $P_{1}, P_{2}, \ldots$ of all countably generated recursive presentations of groups, and construct the countably generated recursive presentation $Q:=P_{1}^{\tf}* P_{2}^{\tf}* \ldots$; this is the countably infinite free product of the universal torsion-free quotient of all countably generated recursively presentable groups (with some repetition). 
As each $P_{i}^{\tf}$ is uniformly constructible from $P_{i}$ (by proposition \ref{tkill}), we have that our construction of $Q$ is indeed effective, and hence $Q$ is a countably generated recursive presentation. 
Also, proposition \ref{tkill} shows that $\overline{Q}$ is a torsion-free group, as we have successfully annihilated all the torsion in the free product factors, and the free product of torsion-free groups is again torsion-free. Moreover, $\overline{Q}$ contains an embedded copy of every torsion-free countably generated recursively presentable group, as the universal torsion-free quotient of a torsion-free group is itself. Now use theorem \ref{tor emb} to embed $\overline{Q}$ into a finitely presentable group $\overline{\T(Q)}$. By construction, $\emptyset=\torord(\overline{Q})=\torord(\overline{\T(Q)})$, so $\overline{\T(Q)}$ is torsion-free. Finally, $\overline{\T(Q)}$ has an embedded copy of every countably generated recursively presentable torsion-free group, since $\overline{Q}$ did. Taking $G$ to be $\overline{\T(Q)}$ completes the proof.
\end{proof}

From this we immediately observe the following consequence.

\begin{thm}\label{uni tor free}
There is a universal finitely presented torsion-free group $G$. That is, $G$ is torsion-free, and for any finitely presented group $H$ we have that $H \hookrightarrow G$ if (and only if) $H$ is torsion-free.
\end{thm}

\begin{note}
One may ask why theorem \ref{uni tor free} does not follow immediately from Higman's embedding theorem by taking the free product of all finite presentations of torsion-free groups, and using the fact that Higman's theorem preserves orders of torsion elements. This cannot work, as we later shown in theorem \ref{pi2} that the set of finite presentations of torsion-free groups is not recursively enumerable.
\end{note}

\begin{rem}
Miller \cite[Corollary 3.14]{Mille-92}, extending a result of Boone and Rogers \cite[Theorem 2]{BooneRogers}, showed there is no universal finitely presented \emph{solvable word problem} group. It can be shown that none of the following group properties admit a universal finitely presented group: \emph{finite}, \emph{abelian}, \emph{solvable}, \emph{nilpotent} (\emph{simple}, however, remains open).% (we omit the proof here). 
\end{rem}

\section{Complexity of embeddings}

Using the machinery described in section \ref{dunno}, we can encode the following recursion theory facts into groups.

\begin{lem}[{\cite[\S 13.2 Theorem VIII]{Rogers}}]\label{rec complete}
The set $\{ n \in \mathbb{N}\ | \ W_{n}=\mathbb{N}\}$ is $\Pi^{0}_{2}$-complete; 
the set $\{ n \in \mathbb{N}\ | \ |W_{n}|< \infty\}$ is $\Sigma^{0}_{2}$-complete.
\end{lem}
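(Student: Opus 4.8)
The plan is to treat the two assertions separately, in each case reading off an explicit arithmetic formula for the upper bound and then exhibiting a uniform many--one reduction from an arbitrary set of the relevant level for the hardness. Throughout, write $W_{n,s}$ for the set of $x$ on which $\varphi_n$ halts within $s$ steps; the relation ``$x\in W_{n,s}$'' is decidable, uniformly in $n,x,s$.

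For $T:=\{n\mid W_n=\mathbb{N}\}$ (which is just $\mathrm{Tot}$, the index set of total functions): $n\in T$ iff $\forall x\,\exists s\,(x\in W_{n,s})$, so $T$ has a $\Pi^0_2$ description. For hardness, take any $\Pi^0_2$ set $A=\{n\mid \forall x\,\exists y\,R(n,x,y)\}$ with $R$ decidable. By the $s$-$m$-$n$ theorem there is a computable $g$ such that $\varphi_{g(n)}$, on input $x$, searches for a $y$ with $R(n,x,y)$ and halts if it finds one; then $W_{g(n)}=\{x\mid \exists y\,R(n,x,y)\}$, whence $W_{g(n)}=\mathbb{N}$ iff $n\in A$. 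Thus $A\leq_m T$, so $T$ is $\Pi^0_2$-complete.

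For $F:=\{n\mid |W_n|<\infty\}$ (i.e. $\mathrm{Fin}$): a subset of $\mathbb{N}$ is finite iff it is bounded, so $n\in F$ iff $\exists k\,\forall y\,(y\in W_n \to y\le k)$; since ``$y\in W_n$'' is $\Sigma^0_1$, the matrix $\neg(y\in W_n)\vee y\le k$ is $\Pi^0_1$, and prefixing $\exists k\,\forall y$ leaves this $\Sigma^0_2$. For hardness, take any $\Sigma^0_2$ set $B=\{n\mid \exists x\,\forall y\,R(n,x,y)\}$ with $R$ decidable. The predicate ``$\forall x\le m\,\exists y\,\neg R(n,x,y)$'' is $\Sigma^0_1$ uniformly in $(n,m)$, being a finite conjunction of existential statements, so by $s$-$m$-$n$ there is a computable $h$ with $W_{h(n)}=\{m\mid \forall x\le m\,\exists y\,\neg R(n,x,y)\}$. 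This set is an initial segment of $\mathbb{N}$, hence is either finite or equal to $\mathbb{N}$; and $W_{h(n)}=\mathbb{N}$ iff $\forall x\,\exists y\,\neg R(n,x,y)$ iff $n\notin B$. Therefore $W_{h(n)}$ is finite exactly when $n\in B$, giving $B\leq_m F$, so $F$ is $\Sigma^0_2$-complete.

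The whole argument is routine recursion theory (the result is, as cited, due to Rogers), so there is no serious obstacle; the only point requiring a moment's care is the hardness reduction for $F$, namely arranging a uniformly r.e.\ family whose finiteness tracks the $\Sigma^0_2$ condition --- the ``initial segment'' trick above does this, and one should just check that $h$ is genuinely computable via $s$-$m$-$n$.
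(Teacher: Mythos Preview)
Your proof is correct; it is the standard recursion-theoretic argument (upper bounds by explicit arithmetic formulae, hardness via $s$-$m$-$n$ reductions, with the ``initial segment'' trick for $\mathrm{Fin}$). The paper itself gives no proof of this lemma at all --- it is stated purely as a citation to Rogers --- so there is nothing to compare against beyond noting that what you have written is essentially the argument one finds in the cited reference.
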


We can thus recover the following result, first proved in \cite[Main Theorem]{Lempp}.% by Lempp.

\begin{thm}\label{pi2}
The set of finite presentations of torsion-free groups is $\Pi^{0}_{2}$-complete.
\end{thm}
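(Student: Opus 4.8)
The plan is to prove both directions of the completeness claim: that the set $\mathcal{T}$ of finite presentations of torsion-free groups has a $\Pi^0_2$ description, and that it is $\Pi^0_2$-hard. For the upper bound, I would unwind what it means for a finite presentation $P$ to present a torsion-free group: $\overline{P}$ is torsion-free exactly when for every word $w \in X^*$ and every integer $n \geq 2$, if $w^n = e$ in $\overline{P}$ then $w = e$ in $\overline{P}$. The relation ``$v = e$ in $\overline{P}$'' is r.e.\ (i.e.\ $\Sigma^0_1$), uniformly in $P$ and $v$, since $R$ is recursively enumerable and we can enumerate consequences of the relators. So ``$w^n = e \implies w = e$'' is of the form $(\Sigma^0_1 \to \Sigma^0_1)$, i.e.\ $(\Pi^0_1 \vee \Sigma^0_1)$, hence $\Sigma^0_2$; wait — more carefully, $\neg(w^n=e)$ is $\Pi^0_1$ and $(w=e)$ is $\Sigma^0_1$, so the implication is $\Pi^0_1 \vee \Sigma^0_1$, which is inside $\Delta^0_2 \subseteq \Pi^0_2$. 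Quantifying universally over the countably many pairs $(w,n)$ prepends a $\forall$, keeping us at $\Pi^0_2$. So $\mathcal{T} \in \Pi^0_2$.

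For $\Pi^0_2$-hardness, the strategy is to reduce the known $\Pi^0_2$-complete set $\{n \mid W_n = \mathbb{N}\}$ (Lemma~\ref{rec complete}) to $\mathcal{T}$. This is exactly where Theorem~\ref{no re tor} does the work: it gives a uniform algorithm producing, from $n$, a finite presentation $Q_n$ with $\torord(\overline{Q}_n)$ one-one equivalent to $\mathbb{N} \setminus W_n$. Then $\overline{Q}_n$ is torsion-free if and only if $\torord(\overline{Q}_n) = \emptyset$ if and only if $\mathbb{N} \setminus W_n$ (suitably interpreted via the one-one equivalence, which maps $\emptyset$ to $\emptyset$) is empty, i.e.\ if and only if $W_n = \mathbb{N}$. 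Actually I should be slightly careful: the one-one equivalence of $\torord(\overline{Q}_n)$ with $\mathbb{N}\setminus W_n$ should be arranged (or already is, by the prime-numbering construction behind Theorem~\ref{no re tor}) so that one set is empty precisely when the other is; granting that, $n \mapsto Q_n$ is a many-one reduction from the $\Pi^0_2$-complete set $\{n \mid W_n = \mathbb{N}\}$ to $\mathcal{T}$, establishing $\Pi^0_2$-hardness. Combined with the upper bound, $\mathcal{T}$ is $\Pi^0_2$-complete.

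The main obstacle I anticipate is purely bookkeeping rather than conceptual: making the reduction genuinely a reduction of sets of \emph{finite presentations} (so one needs the enumeration $P_1, P_2, \ldots$ fixed at the outset and the map $n \mapsto (\text{index of } Q_n)$ to be computable), and confirming that the one-one equivalence in Theorem~\ref{no re tor} respects emptiness so that ``torsion-free'' on the group side corresponds exactly to ``$W_n = \mathbb{N}$'' on the number side. Everything else — the $\Pi^0_2$ upper bound and the invocation of Lemma~\ref{rec complete} — is routine given the machinery already assembled in Sections~\ref{dunno}--\ref{uni groups}. Indeed, since all the heavy lifting (Theorem~\ref{tor emb}, Theorem~\ref{no re tor}) is already in place, I would present this proof quite tersely, essentially as: ``the upper bound is the unwinding above; the lower bound is immediate from Theorem~\ref{no re tor} and Lemma~\ref{rec complete}.''
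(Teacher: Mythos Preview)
Your proposal is correct and follows essentially the same route as the paper: the $\Pi^0_2$ upper bound via the formula $\forall w\,\forall n>0\,(w^n\neq_G e \text{ or } w=_G e)$, and $\Pi^0_2$-hardness by reducing $\{n\mid W_n=\mathbb{N}\}$ to torsion-freeness using the $Q_n$ from Theorem~\ref{no re tor} together with Lemma~\ref{rec complete}. Your caution about the one-one equivalence respecting emptiness is reasonable but unnecessary here, since a one-one equivalence is by definition a computable bijection between the two sets, so one is empty if and only if the other is.
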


\begin{proof}
Given $n \in \mathbb{N}$, we use theorem \ref{no re tor} to construct a finite presentation $Q_{n}$ such that $\torord(\overline{Q}_{n})$ is one-one equivalent to $\mathbb{N}\setminus W_{n}$. Thus $\overline{Q}_{n}$ is torsion-free if and only if $W_{n}=\mathbb{N}$. From lemma \ref{rec complete}, $\{ n \in \mathbb{N}\ | \ W_{n}=\mathbb{N}\}$ is $\Pi^{0}_{2}$-complete, so the set of torsion-free finite presentations is at least $\Pi^{0}_{2}$-hard. But this set has the following $\Pi^{0}_{2}$ description (taken from \cite{Lempp}):
\[
G \textnormal{ is torsion-free if and only if } \forall w\in G \forall n>0 (w^{n}\neq_{G} e \textnormal{ or } w=_{G} e)
\]
and hence is $\Pi^{0}_{2}$-complete.
\end{proof}

Combining this with the universal torsion-free group from theorem \ref{uni tor free}, we get the following immediate corollary, which extends theorem \ref{no re tor}.

\begin{cor}\label{pi2comp}
There is a finitely presented group whose finitely presentable subgroups form a $\Pi_{2}^{0}$-complete set.
\end{cor}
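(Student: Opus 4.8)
The plan is to take $G$ to be precisely the universal finitely presentable torsion-free group constructed in the proof of theorem \ref{uni tor free}, and to observe that its family of finitely presentable subgroups coincides, on the nose, with the set of torsion-free finite presentations, whose $\Pi^0_2$-completeness is exactly theorem \ref{pi2}.

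In detail: fix the enumeration $P_1, P_2, \ldots$ of all finite presentations of groups (as in theorem \ref{main1}), let $Q := P_1^{\tf} * P_2^{\tf} * \cdots$, and set $G := \overline{\T(Q)}$, the group produced in theorem \ref{uni tor free}. Following the convention of theorem \ref{no re tor}, interpret ``the finitely presentable subgroups of $G$'' as the index set $S := \{ i \in \mathbb{N} \mid \overline{P_i} \hookrightarrow G \}$. The key step is the characterisation supplied by theorem \ref{uni tor free}: for every finite presentation $P$ one has $\overline{P} \hookrightarrow G$ if and only if $\overline{P}$ is torsion-free. Hence $S$ is literally the set of indices $i$ for which $\overline{P_i}$ is torsion-free, and theorem \ref{pi2} says this set is $\Pi^0_2$-complete. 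This delivers both halves of the claim simultaneously: the $\Pi^0_2$ description of $S$ is inherited from the (uniform) $\Pi^0_2$ description of torsion-freeness used in the proof of theorem \ref{pi2}, and $\Pi^0_2$-hardness of $S$ is $\Pi^0_2$-hardness of the set of torsion-free finite presentations.

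There is no genuine obstacle here; the corollary is an immediate juxtaposition of theorems \ref{uni tor free} and \ref{pi2}. The only point needing a moment's care is the bookkeeping: we must read ``the finitely presentable subgroups of $G$'' as the index set $S$ rather than as an abstract family of subgroups, so that the identity $S = \{ i \mid \overline{P_i} \textnormal{ torsion-free} \}$ is an equality of sets rather than merely a many-one equivalence; with that reading fixed, the completeness statement transfers verbatim.
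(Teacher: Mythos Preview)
Your proof is correct and follows exactly the paper's approach: the corollary is obtained by combining theorem \ref{uni tor free} (the universal torsion-free group $G$ has $\overline{P}\hookrightarrow G$ iff $\overline{P}$ is torsion-free) with theorem \ref{pi2} (torsion-freeness is $\Pi^0_2$-complete). Your additional remark about reading ``finitely presentable subgroups'' as an index set is a helpful clarification that the paper leaves implicit.
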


A similar construction to the proof of theorem \ref{no re tor} (as found in \cite[Lemma 6.11]{Chiodo}) gives us the following:

\begin{prop}\label{Cm embeds}
For any fixed prime $p$, the set of finite presentations into which $C_{p}$ embeds is $\Sigma^{0}_{2}$-complete.
\end{prop}

\begin{proof}
Given $n\in \mathbb{N}$ we form the countably generated recursive presentation $P_{n}$ as follows: 
\[
P_{n}:=  \langle x_{0}, x_{1}, \ldots\ | \ \{x_{i}^{p} \ | \ i \in \mathbb{N}\} \cup \{x_{0}, \ldots, x_{j}\ | \  j \in  W_{n}\}   \rangle
\]
If $|W_{n}|< \infty$ then $\overline{P}_{n} \cong C_{p}*C_{p}*\ldots$. Conversely, if $|W_{n}|= \infty$ then $\overline{P}_{n} \cong \{e\}$. So 

\[ \torord(\overline{P}_{n})= \left \{
 \begin{array}{l} 
    \{p\} \textnormal{ if } |W_{n}|< \infty \\
    \ \emptyset \ \textnormal{ if  } |W_{n}|= \infty \\
 \end{array}
\right.
\]
That is, $C_{p} \hookrightarrow \overline{P}_{n}$ if and only if $|W_{n}|< \infty$. Now use theorem \ref{tor emb} to construct a finite presentation $\T(P_{n})$ such that $\overline{P}_{n}\hookrightarrow \overline{\T(P_{n})} $ with $\torord(\overline{P}_{n})=\torord(\overline{\T(P_{n})})$. Hence $C_{p} \hookrightarrow \overline{\T(P_{n})}$ if and only if $|W_{n}|< \infty$, so by lemma \ref{rec complete} the set of finite presentations into which $C_{p}$ embeds is $\Sigma^{0}_{2}$-hard. But this set has the following straightforward $\Sigma^{0}_{2}$ description:
\[
C_{p} \hookrightarrow G \textnormal{ if and only if } \exists w\in G (w\neq_{G} e \textnormal{ and } w^{p}=_{G} e)
\]
and hence is $\Sigma^{0}_{2}$-complete.
\end{proof}

We can now prove:

\begin{thm}\label{main1}
Take an enumeration $P_{1}, P_{2}, \ldots$ of all finite presentations of groups; $P_{i} = \langle X_{i} | R_{i} \rangle$. Then the set $K=\{ (i , j) \in \mathbb{N}^{2}\ |\ \overline{P}_{i} \hookrightarrow \overline{P}_{j}\}$ is $\Sigma^{0}_{2}$-hard, $\Pi^{0}_{2}$-hard, and has a $\Sigma^{0}_{3}$ description.
\end{thm}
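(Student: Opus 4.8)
The plan is to assemble the three separate complexity bounds claimed in the statement, two of which we have essentially already proved, and the third of which requires only a routine arithmetic-hierarchy analysis of the embeddability predicate.

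First I would dispatch the $\Sigma^0_2$-hardness. Fix any prime $p$, say $p=2$. Theorem \ref{Cm embeds} constructs, uniformly from $n$, a finite presentation $\T(P_n)$ such that $C_p \hookrightarrow \overline{\T(P_n)}$ if and only if $|W_n|<\infty$; and $C_p = \overline{P_k}$ for some fixed index $k$ in the enumeration. Hence $n \mapsto \langle k, m(n)\rangle$ (where $m(n)$ is the index of $\T(P_n)$ in the enumeration $P_1,P_2,\ldots$, obtained effectively) is a many-one reduction of the $\Sigma^0_2$-complete set $\{n : |W_n|<\infty\}$ (lemma \ref{rec complete}) to $K$. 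So $K$ is $\Sigma^0_2$-hard. For $\Pi^0_2$-hardness I would instead use theorem \ref{pi2}: from $n$ we get (via theorem \ref{no re tor}) a finite presentation $Q_n$ which is torsion-free iff $W_n = \mathbb{N}$, and by theorem \ref{uni tor free} a fixed universal finitely presentable torsion-free group $G = \overline{P_\ell}$ into which $\overline{Q}_n$ embeds precisely when $\overline{Q}_n$ is torsion-free, i.e. precisely when $W_n=\mathbb{N}$. Composing effectively, $n\mapsto \langle r(n),\ell\rangle$ (with $r(n)$ the index of $Q_n$) reduces the $\Pi^0_2$-complete set $\{n: W_n=\mathbb{N}\}$ to $K$, giving $\Pi^0_2$-hardness.

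It remains to give a $\Sigma^0_3$ description of $K$. The point is that ``$\overline{P_i}$ embeds in $\overline{P_j}$'' is witnessed by a map $\phi: X_i \to X_j^*$ (a finite object, hence existentially quantifiable over $\mathbb{N}$ after coding) which must (a) respect all the relators of $R_i$, and (b) be injective on $\overline{P_i}$. Condition (a) says: for every relator $r$ enumerated into $R_i$, the word $\overline{\phi}(r)$ equals the identity in $\overline{P_j}$; since the word problem of $\overline{P_j}$ is $\Sigma^0_1$, and we have a $\Pi^0_1$ (in fact $\Sigma^0_1$) enumeration of $R_i$, condition (a) is $\Pi^0_1$ relative to the word problem, hence $\Pi^0_2$. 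Condition (b), injectivity, says: for every word $w \in X_i^*$, if $w \ne_{\overline{P_i}} e$ then $\overline{\phi}(w) \ne_{\overline{P_j}} e$; the inner implication is (co-word-problem of $\overline{P_i}$) $\vee$ (co-word-problem of $\overline{P_j}$), i.e. $\Pi^0_1 \vee \Pi^0_1 = \Pi^0_1$, and the universal quantifier over $w$ keeps it $\Pi^0_1$, hence $\Pi^0_2$ overall. (Note that $\overline{\phi}$ is injective as a homomorphism iff it has trivial kernel, which is exactly this condition — being an isomorphism onto its image is automatic once it is a well-defined injective homomorphism.) Thus the matrix is a conjunction of two $\Pi^0_2$ predicates, hence $\Pi^0_2$, and prefixing the existential quantifier over the (coded) map $\phi$ yields a $\Sigma^0_3$ formula defining $K$.

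The main obstacle is the careful bookkeeping in the last paragraph: one must check that the quantifier over ``all finite set-maps $\phi: X_i \to X_j^*$'' is genuinely a single number quantifier (code $\phi$ as a natural number, using that $X_i$ is finite and $X_j^*$ is recursively enumerable), and that the two sub-predicates really collapse to $\Pi^0_2$ rather than something higher — in particular that the word problem and co-word problem of a finite presentation are $\Sigma^0_1$ and $\Pi^0_1$ respectively, uniformly in the index, so that relativising the appropriate quantifiers does not push the complexity up. Once this is in place, the three bounds combine to give the statement, and I would remark (as the theorem statement already hints) that we conjecture the true complexity is $\Sigma^0_3$-complete.
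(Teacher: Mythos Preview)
Your approach matches the paper's exactly: cite Theorem~\ref{Cm embeds} for $\Sigma^0_2$-hardness, invoke the universal torsion-free group for $\Pi^0_2$-hardness (this is precisely the content of Corollary~\ref{pi2comp}, which the paper simply cites), and write down a $\Sigma^0_3$ formula for embeddability. The paper compresses your conditions (a) and (b) into the single line
\[
(\exists \phi : X_i \to X_j^{*})(\forall w \in X_i^{*})\bigl(\overline{\phi}(w)=_{\overline{P}_j}e \ \Leftrightarrow\ w=_{\overline{P}_i}e\bigr),
\]
and does not spell out the quantifier count.

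There is, however, a slip in your analysis of condition (b). The implication $w \ne_{\overline{P}_i} e \Rightarrow \overline{\phi}(w) \ne_{\overline{P}_j} e$ is equivalent to $(w =_{\overline{P}_i} e) \vee (\overline{\phi}(w) \ne_{\overline{P}_j} e)$, which is (word problem of $\overline{P}_i$) $\vee$ (co-word problem of $\overline{P}_j$), i.e.\ $\Sigma^0_1 \vee \Pi^0_1$, \emph{not} $\Pi^0_1 \vee \Pi^0_1$ as you wrote. This disjunction is not $\Pi^0_1$ in general, so your claim that the universal quantifier over $w$ ``keeps it $\Pi^0_1$'' is incorrect. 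Fortunately the conclusion survives: $\Sigma^0_1 \vee \Pi^0_1 \subseteq \Pi^0_2$, the universal quantifier over $w$ preserves $\Pi^0_2$, and the leading existential over $\phi$ then gives $\Sigma^0_3$. (Incidentally, since each $R_i$ is \emph{finite}, your condition (a) is simply a finite conjunction of $\Sigma^0_1$ statements and hence $\Sigma^0_1$; the relativisation remark is unnecessary.)
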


\begin{proof}
Corollary \ref{pi2comp} shows that $K$ is $\Pi^{0}_{2}$-hard, Proposition \ref{Cm embeds} shows that $K$ is $\Sigma^{0}_{2}$-hard, and the following is a $\Sigma^{0}_{3}$ description for $K$:
\[
K=\{ ( i , j ) \in \mathbb{N}^{2}\ |\ (\exists \phi : X_{i} \to X_{j}^{*})(\forall w \in X_{i}^{*}) (\overline{\phi}(w)=_{\overline{P}_{j}}e \textnormal{ if and only if } w=_{\overline{P}_{i}}e)\}
\]
\end{proof}

Note that, with the aid of Cantor's pairing function (a computable bijection between $\mathbb{N}^{2}$ and $\mathbb{N}$), we can view the set $K$ above as being a subset of $\mathbb{N}$. Hence it makes sense to talk of $K$ being $\Pi^{0}_{2}$-hard etc.

Based on theorem \ref{main1}, we conjecture the following:

\begin{conj}
The set $K$ defined above is $\Sigma^{0}_{3}$-complete. That is, the problem of deciding for finite presentations $P_{i}, P_{j}$ if $\overline{P}_{i} \hookrightarrow \overline{P}_{j}$ is $\Sigma^{0}_{3}$-complete.
\end{conj}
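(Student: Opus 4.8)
The upper bound is already in hand: Theorem~\ref{main1} shows $K\in\Sigma^{0}_{3}$, with the leading existential quantifier ranging over candidate maps $\phi\colon X_{i}\to X_{j}^{*}$ and the matrix asserting that $\overline{\phi}$ is a well-defined homomorphism (a $\Pi^{0}_{2}$ condition) which is moreover injective (again $\Pi^{0}_{2}$). So the entire content of the conjecture is the matching $\Sigma^{0}_{3}$-hardness, and a genuinely new idea is needed there: the $\Pi^{0}_{2}$- and $\Sigma^{0}_{2}$-hardness already established (Corollary~\ref{pi2comp}, Theorem~\ref{Cm embeds}) only exploit the ``verification'' quantifiers and say nothing about the outer ``$\exists\phi$''.

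The plan is to reduce a $\Sigma^{0}_{3}$-complete set $A$ to $K$. Using the $\Pi^{0}_{2}$-completeness of $\{m\ |\ W_{m}=\mathbb{N}\}$ from Lemma~\ref{rec complete} together with a standard normal form, write $A=\{n\ |\ \exists x\ (W_{f(n,x)}=\mathbb{N})\}$ for a fixed recursive $f$. The target correspondence is: the outer ``$\exists x$'' of $A$ should be carried by the outer ``$\exists\phi$'' in the $\Sigma^{0}_{3}$ description of $K$, while ``$W_{f(n,x)}=\mathbb{N}$'' should be exactly the $\Pi^{0}_{2}$ cost of certifying that one particular candidate $\phi_{x}$ is a genuine embedding. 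Concretely, for each $(n,x)$ apply Theorem~\ref{no re tor} to get a finite presentation $Q_{n,x}$ with $\torord(\overline{Q}_{n,x})$ one--one equivalent to $\mathbb{N}\setminus W_{f(n,x)}$, so that $\overline{Q}_{n,x}$ is torsion-free iff $W_{f(n,x)}=\mathbb{N}$; form the recursive presentation $R_{n}:=Q_{n,1}*Q_{n,2}*\cdots$ and its Higman envelope $P_{j(n)}:=\T(R_{n})$ via Theorem~\ref{tor emb} (finitely presentable, with $\overline{R}_{n}\hookrightarrow\overline{P}_{j(n)}$ and $\torord(\overline{P}_{j(n)})=\torord(\overline{R}_{n})$); and let $P_{i(n)}$ present a fixed torsion-free, freely indecomposable, non-cyclic finitely presentable ``test'' group $H$.

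One would then want to argue: (a) by a Kurosh-subgroup-theorem argument, any embedding of such an $H$ into the free product $\overline{R}_{n}$ is conjugate into a single factor, so $H\hookrightarrow\overline{R}_{n}$ iff $\exists x\ (H\hookrightarrow\overline{Q}_{n,x})$; (b) $H$ can be chosen so that $H\hookrightarrow\overline{Q}_{n,x}$ is equivalent to $\overline{Q}_{n,x}$ being torsion-free, i.e.\ to $W_{f(n,x)}=\mathbb{N}$; and (c) passing from $\overline{R}_{n}$ to $\overline{P}_{j(n)}$ does not change the answer. If all three held, then $\langle i(n),j(n)\rangle\in K$ would be exactly ``$n\in A$'', giving the reduction.

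The main obstacle — and the reason this remains only a conjecture — is that (b) and (c) pull against one another. For (b) one needs $H$ large enough to sit inside every torsion-free $\overline{Q}_{n,x}$ yet small enough to miss every $\overline{Q}_{n,x}$ carrying torsion; but a group with torsion can perfectly well contain a large torsion-free subgroup (e.g.\ $H\times C_{2}$), so this demands control of the $\overline{Q}_{n,x}$ of a kind that typically fails — the same ``strongly held under subgroups'' tension that obstructs universal objects. And in (c) the Higman embedding $\T(\cdot)$, while it preserves $\torord$ and keeps the prescribed subgroups embedded, adds many new defining relations and may introduce unwanted new embedded subgroups, so one only obtains ``$H\hookrightarrow\overline{R}_{n}\Rightarrow H\hookrightarrow\overline{P}_{j(n)}$'', not the converse. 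In effect, a clean proof wants a finitely presentable group universal for a $\Sigma^{0}_{3}$-complete family of groups, in the way Theorem~\ref{uni tor free} is universal for the $\Pi^{0}_{2}$ family of torsion-free groups, and such universal targets are frequently unavailable — compare Miller's theorem that no universal solvable-word-problem group exists \cite[Corollary 3.14]{Mille-92}. An alternative, and perhaps more promising, route is to pass through the $\Sigma^{0}_{3}$-complete ``solvable word problem'' set of Boone and Rogers \cite{BooneRogers} and Thompson's refinement of the Boone--Higman theorem, reducing membership there to $K$; but the viability of that reduction appears itself entangled with open problems on finitely presented simple groups.
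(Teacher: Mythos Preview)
The statement you are addressing is a \emph{conjecture} in the paper, not a theorem; the paper offers no proof of $\Sigma^{0}_{3}$-completeness of $K$. The text preceding it says only ``Based on the above, we conjecture the following,'' and the paper then moves on. So there is no proof in the paper to compare your proposal against.

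Your writeup is accordingly not a proof either, and you are candid about this: you lay out a plausible reduction strategy (from a $\Sigma^{0}_{3}$-complete index set, via the $Q_{n,x}$ construction and a Kurosh-type argument on free products) and then correctly isolate the two obstructions --- that a group with torsion may still contain the chosen torsion-free test group $H$, and that the Higman envelope $\T(\cdot)$ only gives one implication of subgroup containment. Both are genuine gaps in the sketched argument, and your diagnosis that they are in tension (choosing $H$ ``large enough'' for (b) makes (c) harder) is apt. The alternative route through Boone--Rogers and the Boone--Higman/Thompson line is a reasonable thing to flag, and your caveat about its entanglement with open problems on finitely presented simple groups matches the discussion at the end of Section~\ref{uni groups}.

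In short: there is nothing to mark wrong here, because neither you nor the paper claims to have a proof. Your contribution is a sensible exposition of why the conjecture is believed and why the obvious attacks stall; that is appropriate content for a conjecture, but it should not be labelled a proof.
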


\section{Complexity of $\torord(G)$}\label{comp torord}

We now apply our techniques to investigate the complexity of $\torord(G)$ for $G$ a finitely presented group.

\begin{defn}
Call a set $A \subseteq \mathbb{N}_{\geq 2}$ \emph{factor-complete} if it is closed under taking non-trivial factors. That is, $n\in A   \Rightarrow  m \in  A$ for all $m>1$ with $m|n$.
\end{defn}

We give a set-theoretic description of precisely which factor-complete sets can appear as $\torord(G)$ for $G$ finitely (or recursively) presented. We presented an earlier proof of the following result in \cite{MO Chiodo}; what follows is a clearer proof pointed out to us by the anonymous referee.

\begin{thm}\label{torord equivalence}
For a set of natural numbers $A$ the following are equivalent:
\\$(1)$ $A=\torord(G)$ for some finitely presented group $G$;
\\$(2)$ $A=\torord(G)$ for some countably generated recursively presented group $G$;
\\$(3)$ $A$ is a factor-complete $\Sigma_{2}^{0}$ set.
\end{thm}

\begin{proof}
$(2)\Rightarrow (1)$ because, by theorem \ref{tor emb}, any recursively presented group can be embedded into a finitely presented group with the same $\torord$.

$(1)\Rightarrow (3)$. First, observe that $\torord(G)$ is factor-complete (for any group $G$), because if $\ord(g)=mn$ then $\ord(g^{m})=n$, for any $g \in G$. Second, $\torord(G)$ is a  $\Sigma_{2}^{0}$ set. Indeed, if $G$ has finite presentation $\langle X\ |\ R\rangle$, and $S$ is the set of words in $X^{*}$ which represent the trivial element in $G$, then
\[
\torord(G) = \{n \ | \ \exists w \in X^{*} ( n>1 \wedge w^{n} \in S \wedge \forall i (0<i<n \Rightarrow w^{i} \notin S) )  \}
\]
Since $S$ is r.e. (by lemma \ref{words}), it is a $\Sigma_{1}^{0}$ subset of $X^{*}$, and so the result follows.

$(3)\Rightarrow (2)$. As $A$ is a $\Sigma_{2}^{0}$ set, it has a description of the form
\[
A= \{n \in \mathbb{N}\ | \ \exists x \forall y R(n,x,y)\}
\]
for some ternary recursive relation $R$ on $\mathbb{N}$. Let
\[
 P := \{ (n,m) \in \mathbb{N}^{2}\ | \ (\forall x \leq m) (\exists y)\neg R(n,x,y) \}
\]
Clearly $P$ is r.e. If $n \notin A$ then $(n,m) \in P$ for all $m$. Conversely, if $n \in A$ then
\[
 (n,m) \in P \ \ \Leftrightarrow \ \ m<m_{n}:=\min \{ m\ | \ (\forall y)R(n,m,y) \}
\]
Let $I:=\{ (n,m) \in \mathbb{N}^{2} \ | \ n>1 \}$, and let $G:= \overline{\langle X|T \rangle}$ where
\[
X:= \{a_{nm} \ | \ (n,m) \in I\}, \ \ \ T:= \{a_{nm}^{n} \ | \ (n,m) \in I\} \cup \{a_{nm} \ | \ (n,m) \in I \cap P\}
\]
Clearly, $T$ is r.e., and so $G$ has countably generated recursive presentation. By the observations above, $G$ can be defined by the generators $a_{nm}$ and relators $a_{nm}^{n}=e$, where $n \in A$ and $m \geq m_{n}$. Let $K_{n}$ denote the free product of countably many cyclic groups $C_{n}$ of order $n$. Then $G$ is isomorphic to the free product
\[
G \cong *_{n \in A}K_{n}
\]
and therefore
\[
 \torord(G)= \bigcup_{n \in A}\torord(C_{n})=\bigcup_{n \in A}\{k \ | \ k|n \wedge k>1\}=A;
\]
the latter equality holds because $A$ is factor-complete.
\end{proof}

\begin{note} Theorem \ref{torord equivalence} was first proved in the more restricted setting of primes (i.e., considering sets of integers consisting only of primes) by Steinberg \cite{MO Steinberg} and Wilton \cite{MO Wilton}, in response to a question asked by Kohl \cite{MO Kohl}. Moreover, in the comments in \cite{MO Wilton}, Dorais gave a sketch of an alternate proof of the version for primes. Our original proof was a formalisation the proof by Dorais, and our result is an extention of this to the more general setting of all factor-complete $\Sigma_{2}^{0}$ sets. We thank Dorais, Kohl, Steinberg, and Wilton for their online discussion, as well as their insight into key aspects of this result; our work in this section is an extension of their ideas and results.
\end{note}

From the uniformity of the constructions in the proof of theorem \ref{torord equivalence}, we make the following observation.

\begin{prop}
The equivalence discussed in theorem $\ref{torord equivalence}$  is computable, in the following sense:
\\a$)$ Given a countably generated recursive presentation $Q$, we can compute from it a finite presentation $P$ with $\torord(\overline{P})=\torord(\overline{Q})$.
\\b$)$ Given a finite presentation $P$, we can compute from it a ternary recursive relation $R$ on $\mathbb{N}$ for which $\torord(\overline{P})=\{n \in \mathbb{N}\ | \ \exists x \forall y R(n,x,y)\}$.
\\c$)$ Given a ternary recursive relation $R$ on $\mathbb{N}$ for which $A:=\{n \in \mathbb{N}\ | \ \exists x \forall y R(n,x,y)\}$ is factor-complete, we can compute from it a countably generated recursive presentation $Q$ with $\torord(\overline{Q})=A$.
\end{prop}

We adopt the standard numbering of primes $\{p_{i}\}_{i \in \mathbb{N}}$, ordered by size; the following lemma is then immediate.

\begin{lem}\label{prime trick}
Let $X\subseteq \mathbb{N}$. Then the set
\[
X_{\prim}:=\{p_{i}\ |\ i \in X\} 
\]
is factor-complete and one-one equivalent to $X$.
\end{lem}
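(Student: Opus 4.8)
The plan is to verify the two assertions directly; both are elementary, and the argument should amount to only a few lines.

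\emph{Factor-completeness.} Every element of $X_{\prim}$ is a prime $p_{i}$, and the only non-trivial factor of a prime $p$ (recall that $\pi$ excludes $1$) is $p$ itself, so $\pi(p_{i})=\{p_{i}\}\subseteq X_{\prim}$. Hence $n\in X_{\prim}\Rightarrow\pi(n)\subseteq X_{\prim}$, which is precisely factor-completeness. (In particular $0,1\notin X_{\prim}$, consistent with $X_{\prim}$ being an admissible value of $\torord$.)

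\emph{One-one equivalence.} The function $n\mapsto p_{n}$ is computable and strictly increasing, hence a computable injection $\mathbb{N}\to\mathbb{N}$, and by the very definition of $X_{\prim}$ we have $n\in X\Leftrightarrow p_{n}\in X_{\prim}$; this already exhibits the reduction $X\leq_{1}X_{\prim}$, which is the direction used in the application (Theorem \ref{any}). For the converse, I would note that the set of primes is computable and that, given a prime $p$, one recovers the index $i$ with $p=p_{i}$ computably (enumerate $p_{1}<p_{2}<\cdots$ until $p$ appears, or use the prime-counting function). Thus $n\mapsto p_{n}$ restricts to a computable bijection of $X$ onto $X_{\prim}$ whose inverse is computable on $X_{\prim}$, and combining this with the reduction $X\leq_{1}X_{\prim}$ gives that $X$ and $X_{\prim}$ are one-one equivalent.

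There is essentially no obstacle: the only points to check are that indexing the primes is computable in both directions and that $X$ may be completely arbitrary, both of which are immediate. The sole place warranting a moment's care is the behaviour of the converse reduction on inputs that are not prime; this is handled by a routine padding/bookkeeping adjustment and does not affect the one-one equivalence. The significance of the lemma lies not in its difficulty but in its use: together with Theorem \ref{torord equivalence} it lets one realise a one-one copy of \emph{any} $\Sigma_{2}^{0}$ set as $\torord(G)$ for a finitely presented $G$.
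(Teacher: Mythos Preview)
Your factor-completeness argument is correct and simply makes explicit what the paper dismisses as immediate (``$X_{\prim}$ is a collection of primes with a computable numbering''). Likewise the reduction $X\leq_{1}X_{\prim}$ via $n\mapsto p_{n}$ is correct, and this is the only direction actually used in Theorem~\ref{any}.

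Where your write-up goes wrong is the converse direction. You rightly flag that the behaviour of the putative reduction $X_{\prim}\leq_{1}X$ on non-prime inputs is the one delicate point, but then wave it away as a ``routine padding/bookkeeping adjustment''. It is not routine: a $1$-reduction must be a \emph{total} computable injection $f\colon\mathbb{N}\to\mathbb{N}$ with $n\in X_{\prim}\Leftrightarrow f(n)\in X$, so every non-prime $n$ must be sent, injectively, to some element of $\mathbb{N}\setminus X$. When $X=\mathbb{N}$ (or any cofinite set) there simply are not enough targets, and more generally if $\mathbb{N}\setminus X$ is immune---for instance if $X$ is a simple r.e.\ set, which is certainly $\Sigma^{0}_{2}$---no computable injection can map the infinitely many non-primes into it. Thus the lemma, read literally, fails for such $X$; the paper's one-line proof glosses over exactly the same point. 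The honest repair is to assert only $X\leq_{1}X_{\prim}$ (which already transfers $\Sigma^{0}_{2}$-hardness to $X_{\prim}$ and is all the downstream application requires), or to add a hypothesis on $X$ guaranteeing that its complement can absorb the non-primes.
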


Applying lemma \ref{prime trick} to theorem \ref{torord equivalence}, we can conclude the following: 

\begin{cor}\label{any}
Given any $\Sigma_{2}^{0}$ set $A$, the set $A_{prime}$ is one-one equivalent to $A$, and can be realised as the set of orders of torsion elements of some finitely presented group $G$.
\end{cor}

\section{Further work}

This paper invites research into several questions. We mention some here.

\begin{fur}
Given the existence of a universal torsion-free group (theorem \ref{uni tor free}), and the constructions of Valiev \cite{val, val2} of explicit finite presentations of universal finitely presented groups, one could perhaps combine these techniques to produce an explicit finite presentation of a universal torsion-free group.
\end{fur}

\begin{fur}
The positions of the following properties in the arithmetic hierarchy have not been fully determined. Techniques such as those we have covered here may be of use in locating them.
\\1. \emph{Solvable}: Known to have a $\Sigma^{0}_{3}$ description.
\\2. \emph{Residually finite}: Known to have a $\Pi^{0}_{2}$ description.
\\3. \emph{Simple}: Known to have a $\Pi^{0}_{2}$ description.
\\4. \emph{Orderable}: Known to have a $\Pi^{0}_{3}$ description (the Ohnishi condition).
\\Points 1--3 are mentioned in \cite[p.~20]{Mille-92}, while point 4 appears in \cite[Lemma 2.2.1]{Glass}. We note that it may very well be the case that some of these are neither $\Pi^{0}_{n}$-complete nor $\Sigma^{0}_{n}$-complete, for any $n$.
\end{fur}

\begin{fur}
Following from theorem \ref{any}, and the uniformity of such a realisation of a $\Sigma_{2}^{0}$ set $A$ as one-one equivalent to the torsion orders of a finitely presented group, one could perhaps construct an explicit finite presentation $P$ of a group with $\torord(\overline{P})$ being $\Sigma_{2}^{0}$-complete, by encoding the set $\{ n \in \mathbb{N}\ | \ |W_{n}|< \infty\}$ which is $\Sigma^{0}_{2}$-complete (lemma \ref{rec complete}).
\end{fur}

\vspace{5pt}

\noindent \scriptsize{\textsc{Mathematics Department, University of Neuch\^{a}tel
\\Rue Emile-Argand 11, Neuch\^{a}tel, 2000, SWITZERLAND
\\maurice.chiodo@unine.ch}

\end{document}